\documentclass[12pt]{article}

\usepackage[utf8]{inputenc}

\usepackage[a4paper,nomarginpar]{geometry}
\geometry{
 lmargin=25mm,
 rmargin=25mm,
 tmargin=25mm,
 bmargin=25mm}

\usepackage[english]{babel}

\usepackage{amsmath,amsfonts,amssymb,amsthm}
\usepackage{amscd}

\newtheorem{theorem}{Theorem}

\newtheorem{corollary}[theorem]{Corollary}

\theoremstyle{definition}

\newtheorem{example}[theorem]{Example}

\theoremstyle{remark}
\newtheorem{remark}[theorem]{Remark}


\newcommand{\N}{\mathbb{N}} 
\newcommand{\Z}{\mathbb{Z}} 
\newcommand{\T}{\mathbb{T}} 
\newcommand{\K}{\mathbb{K}} 



\newcommand{\orb}{\operatorname{Orb}}

\begin{document}

\title{Strong mixing measures for linear operators and frequent hypercyclicity}


\author{M. Murillo-Arcila and  A. Peris\footnote{IUMPA, Universitat Polit\`{e}cnica de Val\`{e}ncia, Departament de Matem\`{a}tica Aplicada, Edifici 7A, 46022 Val\`{e}ncia, Spain. e-mail: aperis@mat.upv.es}}

\date{ }

\maketitle

\begin{abstract}
We construct strongly mixing  invariant measures with full support
for operators  on $F$-spaces which satisfy the Frequent Hypercyclicity Criterion.
For unilateral backward shifts on sequence spaces, a slight modification shows
that one can even obtain exact invariant measures.\footnote{Keywords: hypercyclic operators,  strongly mixing measures\\ 
2010 MSC: 37A25, 47A16.}
\end{abstract}

\section{Introduction}

 We recall that an operator $T$ on a topological vector space $X$ is called \emph{hypercyclic} if there is a vector $x$  in $X$  such that its \emph{orbit} $\orb (x,T)=\{x, Tx, T^2x,\dots\}$ is dense in $X$.   The recent books \cite{bayart_matheron2009dynamics} and \cite{grosse-erdmann_peris2011linear} contain the theory and most of the recent advances on hypercyclicity and linear dynamics, especially in topological dynamics.

Here we are concerned with measure theoretic  properties. Let  $(X,\mathfrak{B},\mu)$ be a probability space, where $X$ is a topological space and $\mathfrak{B}$ denotes the $\sigma$-algebra of Borel subsets of $X$. We will say that a Borel probability measure $\mu$ has \emph{full support} if for each non-empty open set $U\subset X$ we have $\mu(U)>0$. A measurable map $T:(X,\mathfrak{B},\mu)\rightarrow(X,\mathfrak{B},\mu)$ is called a \emph{measure-preserving} transformation  if $\mu(T^{-1}(A))=\mu(A)$ for all $A\in\mathfrak{B})$.  $T$ is said to be \emph{strongly mixing} with respect to $\mu$ if
$$
\lim_{n\rightarrow\infty}\mu(A\cap T^{-n}(B))=\mu(A)\mu(B)\qquad (A,B\in\mathfrak{B}),
$$
and it is \emph{exact} if given $A\in\bigcap_{n=0}^\infty T^{-n}\mathfrak{B}$ then either $\mu(A)=0$ or $\mu(A)=1$.  The interested reader is referred  to \cite{walters-peter1982an,einsiedler_ward2011ergodic} for a detailed account on the above properties.

Ergodic theory was first used for the dynamics of linear operators by  Rudnicki \cite{Rdn93} and Flytzanis \cite{Fly95}. During the last few years it has been given special attention thanks to the work of Bayart and Grivaux \cite{BaGr05,BaGr06}. The papers  \cite{BadGr07a,BaGr07,bayart_matheron0000mixing,DFGP12,Gri,Rud12}, for instance, contain recent advances on the subject.

The concept of frequent hypercyclicity was introduced by Bayart and Grivaux \cite{BaGr06} inspired by Birkhoff's ergodic theorem.
They also gave the  first version of a Frequent Hypercyclicity Criterion, although we will consider the formulation of Bonilla and Grosse-Erdmann \cite{bonilla_grosse-erdmann2007frequently}  for operators on separable $F$-spaces. Another (probabilistic) version of it was given by Grivaux \cite{Gri06}.

 We derive under the hypothesis of Bonilla and Grosse-Erdmann   a stronger result by showing that a $T$-invariant mixing measure can be obtained. Recently, Bayart and Matheron gave very general conditions expressed on eigenvector fields associated with unimodular eigenvalues under which an operator $T$ admits a $T$-invariant mixing
 measure \cite{bayart_matheron0000mixing}. Actually, on the one hand our results can be deduced from \cite{bayart_matheron0000mixing} in the context of complex Fr\'{e}chet  spaces, and on the other hand we only need rather elementary tools.

From now on, $T$ will be an operator defined on a separable $F$-space $X$.

\section{Invariant measures and the frequent hypercyclicity criterion}

We recall that a series $\sum_nx_n$ in $X$ \emph{converges unconditionally} if it converges and, for any $0$-neighbourhood $U$ in $X$, there
exists some $N\in \N$ such that $\sum_{n\in F} x_n\in U$ for every finite set $F\subset \{ N, N+1,N+2,\dots \}$.

We are now ready to present our main result. The idea behind the proof is to construct a ``model'' probability space $(Z,\overline{\mu})$ and a (Borel) measurable map $\Phi:Z\to X$, where
$Z\subset \N^\Z$ is such that $\sigma (Z)=Z$ for the Bernoulli shift $\sigma (\dots , n_{-1},n_0,n_1,\dots )=(\dots , n_{0},n_1,n_2,\dots )$, $\overline{\mu}$ is a $\sigma^{-1}$-invariant strongly mixing measure, $Y:=\Phi (Z)$ is a $T$-invariant dense subset of $X$, $\Phi \sigma^{-1}=T\Phi$, and then the Borel probability measure $\mu$ on $X$ defined by $\mu(A)=\overline{\mu}(\Phi^{-1}(A))$, $A\in\mathfrak{B}(X)$, is $T$-invariant and strongly mixing. We will use the slight generalization of the Frequent Hypercyclicity Criterion for operators given in
\cite[Remark 9.10]{grosse-erdmann_peris2011linear}.

\begin{theorem}\label{fhc}
Let $T$ be an operator on a separable $F$-space $X$. If there is a dense subset $X_0$ of $X$ and a sequence of maps $S_n:X_0\rightarrow X$ such that, for each $x\in X_0$,
\begin{itemize}
\item[(i)]$\sum_{n=0}^\infty T^nx$ converges unconditionally,
\item[(ii)]$\sum_{n=0}^\infty S_nx$ converges unconditionally, and
\item[(iii)]$T^nS_nx=x$ and $T^mS_nx=S_{n-m}x$ if $n>m$,
\end{itemize}
then there is a $T$-invariant strongly mixing Borel probability measure $\mu$ on $X$ with full support.
\end{theorem}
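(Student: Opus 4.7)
\bigskip
\noindent\emph{Proof plan.}
Fix a translation-invariant $F$-norm $d$ on $X$ and enumerate a countable dense subset of $X_0$ as $(y_k)_{k\geq 1}$ with $y_0:=0$. For $i\in\Z$ define $\phi_i:=T^i$ when $i\geq 0$ and $\phi_i:=S_{-i}$ when $i<0$; condition (iii) immediately gives $T\phi_i=\phi_{i+1}$ for every $i\in\Z$, while (i) and (ii) say that $\sum_{i\in\Z}\phi_i(y_k)$ converges unconditionally in $X$ for each fixed $k\geq 1$.

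The model is built as follows. Choose probabilities $p_k>0$ ($k\geq 1$) summing to less than $1$ and put $p_0:=1-\sum_{k\geq 1}p_k$. Let $\overline{\mu}$ be the Bernoulli product measure on $Z:=\N^{\Z}$ with one-dimensional marginal $\sum_{k\geq 0}p_k\delta_k$; then $\overline{\mu}$ is $\sigma$-invariant and strongly mixing. Setting $\Phi(z):=\sum_{i:\,z_i\neq 0}\phi_i(y_{z_i})$, one has to verify, by an appropriate choice of $(p_k)$ tuned to the unconditional convergence rates of $\sum_i\phi_i(y_k)$, that $\Phi$ is $\overline{\mu}$-almost surely well defined on a $\sigma$-invariant subset of $Z$ of full measure. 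A direct computation based on $T\phi_i=\phi_{i+1}$ and the index shift $j=i+1$ then yields the equivariance $\Phi\circ\sigma^{-1}=T\circ\Phi$ wherever $\Phi$ is defined.

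With this in hand, the push-forward $\mu:=\Phi_*\overline{\mu}$ is automatically $T$-invariant, since $\Phi^{-1}(T^{-1}A)=\sigma\Phi^{-1}(A)$ together with $\sigma$-invariance of $\overline{\mu}$ gives $\mu(T^{-1}A)=\mu(A)$, and strongly mixing for $T$, since the iterated identity $\Phi^{-1}(T^{-n}A)=\sigma^n\Phi^{-1}(A)$ combined with the strong mixing of the Bernoulli shift yields $\mu(A\cap T^{-n}B)\to\mu(A)\mu(B)$. For full support, given a non-empty open $U\subset X$, pick $y_k\in U$ and $\eps>0$ so that the $\eps$-ball around $y_k$ lies in $U$; for sufficiently large $M$ the cylinder $C_{k,M}:=\{z\in Z:z_0=k,\ z_i=0\text{ for }0<|i|\leq M\}$ has $\overline{\mu}$-measure $p_k p_0^{2M}>0$, and restricting to those $z\in C_{k,M}$ for which the tail $\sum_{|i|>M}\phi_i(y_{z_i})$ has $d$-value below $\eps$ produces a positive-measure subset of $\Phi^{-1}(U)$, so $\mu(U)>0$.

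The main obstacle I anticipate is the almost-sure convergence of the random series defining $\Phi$: in a general $F$-space the unconditional convergence guaranteed by (i) and (ii) is strictly weaker than absolute $d$-summability of $(\phi_i(y_k))_i$, so the naive bound $d(\Phi(z),0)\leq\sum_k p_k\sum_i d(\phi_i(y_k),0)$ is not available. The likely remedy is either to enlarge the dense family by including rescalings $n^{-1}y_k$ so that the tails of the random series telescope through a judicious choice of the parameters $(p_k)$, or to invoke an It\^o--Nisio-type argument adapted to the $F$-space setting; once this is secured, every other step is essentially formal.
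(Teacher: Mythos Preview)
Your overall architecture---Bernoulli shift on $\N^\Z$, the map $\Phi(z)=\sum_i\phi_i(y_{z_i})$, the equivariance $\Phi\circ\sigma^{-1}=T\circ\Phi$, and the push-forward---matches the paper exactly. The gap you yourself flag, however, is the heart of the argument, and neither of your proposed remedies is what is actually needed (nor is either easy to make rigorous in a general $F$-space: It\^o--Nisio-type results rely on symmetry and typically on local convexity, and rescalings $n^{-1}y_k$ do not obviously improve the tail behaviour of a doubly infinite random sum).

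The paper's fix is combinatorial rather than probabilistic. One does not attempt to prove that the full random series converges $\overline{\mu}$-a.s.; instead one first carves out a \emph{deterministic} compact set $K\subset\N^\Z$ on which $\Phi$ converges by construction, and then chooses the weights $(p_k)$ so that $\overline{\mu}(K)>0$. Concretely, using (i)--(ii) pick integers $N_1<N_2<\cdots$ so that $\sum_{|i|>N_n}\phi_i(y_{m_i})$ lies in a fixed $0$-neighbourhood $U_{n+1}$ whenever $m_i\le 2l$ for $N_l<|i|\le N_{l+1}$, $l\ge n$; this is possible because unconditional convergence is uniform over the finite set $\{y_1,\dots,y_{2l}\}$. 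Set $K=\prod_i F_i$ with $F_i=\{1,\dots,l\}$ for $N_l<|i|\le N_{l+1}$ (and $F_i=\{1\}$ near the origin). Then $\Phi$ is well defined and continuous on each $\sigma^s(K)$. Now choose $(p_k)$ so that $\prod_l\bigl(\sum_{j\le l}p_j\bigr)^{N_{l+1}-N_l}>0$; this gives $\overline{\mu}(K)>0$, and since $Z:=\bigcup_s\sigma^s(K)$ is $\sigma$-invariant and $\overline{\mu}$ is ergodic, $\overline{\mu}(Z)=1$. Your push-forward and full-support arguments then go through verbatim on $Z$.
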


\begin{proof}
We suppose that $X_0=\{x_n \ ; \ n\in\mathbb{N}\}$ with $x_1=0$ and $S_n0=0$ for all $n\in\N$. Let $(U_n)_n$ be a basis of balanced open $0$-neighbourhoods in $X$ such that
$U_{n+1}+U_{n+1} \subset U_n$, $n\in \mathbb{N}$.
By $(i)$ and $(ii)$, there exists an increasing sequence of positive integers $(N_n)_n$  with $N_{n+2}-N_{n+1}> N_{n+1}-N_n$ for all
$n\in \N$ such that
\begin{equation}\label{unconditional}
\sum_{k>N_n}T^kx_{m_k}\in  U_{n+1} \mbox{ and } \sum_{k>N_n}S_kx_{m_k}\in U_{n+1}, \mbox{ if } m_k\leq 2l, \mbox{ for } N_l< k\leq N_{l+1}, \ l\geq n.
\end{equation}
Actually, this is a consequence of the completeness of $X$ and the fact that, for each $0$-neighbourhood $U$ and for all $l\in\N$, there is $N\in \N$ such that $\sum_{k\in F}T^kx\in U$ and $\sum_{k\in F}S_kx\in U$ for any finite subset $F\subset ]N,+\infty [$ and for each $x\in\{ x_1,\dots ,x_{2l}\}$.

\vspace{3mm}

\noindent \textbf{1.-The model probability space $(Z,\overline{\mu})$.}

\vspace{3mm}

We define $K=\prod_{k\in\mathbb{Z}}F_k$ where
$$
F_k=\{1,\ldots m\} \mbox{ if } N_m<|k|\leq N_{m+1}, \ m\in\N, \mbox{ and } F_k=\{1\}, \mbox{ if } |k|\leq N_1.
$$
Let $K(s):=\sigma^s(K)$, $s\in\Z$, where $\sigma :\N^\Z\to\N^\Z$ is the backward shift. $K(s)$ is a compact space when endowed with the product topology inherited from $\mathbb{N}^\mathbb{Z}$, $s\in \Z$.

We consider in $\mathbb{N}^\mathbb{Z}$ the product measure $\overline{\mu}=\bigotimes_{k\in\mathbb{Z}}\overline{\mu_k}$, where $\overline{\mu_k}(\{n\})=p_n$ for all $n\in\mathbb{N}$ and $\overline{\mu_k}(\mathbb{N})=\sum_{n=1}^\infty p_n=1$, $k\in\Z$. The values of  $p_n\in ]0,1[$ are
 selected such that,  if
 $$
 \beta_j:=\left(\sum_{i=1}^{j}p_i\right)^{N_{j+1}-N_{j}}, \ j\in\N, \mbox{ then } \prod_{j=1}^\infty\beta_j>0.
 $$
Let $Z=\bigcup_{s\in\Z} K(s)$. We have
$$
\overline{\mu}(Z)\geq \overline{\mu}(K) = \prod_{|k|\leq N_1}\overline{\mu}_k(\{1\}) \prod_{l= 1}^\infty \left( \prod_{N_l<|k|\leq N_{l+1}}
 \overline{\mu}_k(\{1,\dots,l\})\right)=p_1^{2N_1+1}(\prod_{l=1}^\infty\beta_l)^2>0.
$$
It is well-known \cite{walters-peter1982an} that $\overline{\mu}$ is a $\sigma^{-1}$-invariant strongly mixing Borel probability  measure. Since $\sigma (Z)=Z$ and it has positive measure, then $\overline{\mu}(Z)=1$.

\vspace{3mm}

\noindent \textbf{2.-The map $\Phi$.}

\vspace{3mm}

Given $s\in \Z$ we define the  map $\Phi:K(s)\rightarrow X$  by
\begin{equation}\label{themap}
\Phi((n_k)_{k\in\mathbb{Z}})=\sum_{k<0}S_{-k}x_{n_k}+x_{n_0}+\sum_{k>0}T^kx_{n_k}.
\end{equation}
$\Phi$ is well-defined since, given $(n_k)_{k\in\Z}\in K(s)$ and for $l\geq |s|$, we have $n_k\leq 2l$ if $N_l<|k|\leq N_{l+1}$, which shows
the convergence of the series in \eqref{themap} by \eqref{unconditional}.

$\Phi$ is also continuous. Indeed, let
$(\alpha (j))_j$  be a sequence of elements of $K(s)$ that converges to $\alpha\in K(s)$ and fix any $n\in\N$ with $n>|s|$.
We will find $n_0\in\N$ such that $\Phi(\alpha(j))-\Phi(\alpha)\in U_n$ for $n\geq n_0$. To do this, by definition of the topology in $K(s)$ there
exists $n_0\in\N$ such that
$$
\alpha (j)_k=\alpha_k\quad\mbox{if}\quad |k|\leq N_{n+1} \mbox{ and } j\geq n_0.
$$
By \eqref{unconditional} we have
$$
\Phi(\alpha (j))-\Phi(\alpha)=\sum_{k<-N_{n+1}}S_{-k}(x_{\alpha (j)_k}-x_{\alpha_k})+\sum_{k> N_{n+1}}T^k(x_{\alpha (j)_k}-x_{\alpha_k})\in U_{n}
$$
for all $j\geq n_0$. This shows the continuity of $\Phi:K(s)\to X$ for every $s\in\Z$.

The map $\Phi$ is then well-defined on $Z$, and $\Phi:Z\to X$ is measurable (i.e., $\Phi^{-1}(A)\in\mathfrak{B}(Z)$ for every $A\in\mathfrak{B}(X)$).

\vspace{3mm}

\noindent \textbf{3.-The measure $\mu$ on $X$.}

\vspace{3mm}

 $L(s):=\Phi(K(s))$ is compact in $X$, $s\in\Z$, and $Y:=\bigcup_{s\in\Z} L(s)$ is  a   $T$-invariant Borel subset of $X$ because $\Phi \sigma^{-1}=T\Phi$.

We then define in $X$ the  measure $\mu(A)=\overline{\mu}(\Phi^{-1}(A))$ for all $A\in\mathfrak{B}(X)$. Obviously, $\mu$ is well-defined and it is a $T$-invariant strongly mixing Borel probability  measure.
The proof is completed by showing that $\mu$ has full support. Given a non-empty open set $U$ in $X$, we pick $n\in\N$ satisfying $x_n+U_n\subset U$.
Thus
$$
\mu(U)\geq \mu(\{x=x_n + \sum_{k>N_n}T^kx_{m_k}+ \sum_{k>N_n}S_kx_{m_k} \ ; \ m_k\leq 2l \mbox{ for }  N_l< k\leq N_{l+1}, \ l\geq n \})
$$
$$
\geq \overline{\mu}_0(\{ n\})\prod_{0<|k|\leq N_n}\overline{\mu}_k(\{1\}) \prod_{l= n}^\infty \left( \prod_{N_l<|k|\leq N_{l+1}}
 \overline{\mu}_k(\{1,\dots,2l\})\right)>p_n p_1^{2N_n}(\prod_{l=n}^\infty\beta_l)^2>0.
$$
\end{proof}

As we mentioned in the Introduction, Theorem~\ref{fhc} can be deduced from \cite[Corollary 1.3]{bayart_matheron0000mixing} when dealing with operators on separable complex Fr\'{e}chet spaces. Indeed, the argument of \'{E}. Matheron is the following (we thank S. Grivaux for letting us know about it):

Let $T:X\to X$ be an operator on a separable complex Fr\'{e}chet space $X$ satisfying the hypothesis of the Frequent Hypercyclicity Criterion given in Theorem~\ref{fhc}, and suppose that $X_0=\{ x_n \ ; \ n\in\N\}$. We define the following family of continuous $\T$-eigenvector fields for $T$
$$
E_m(\lambda)=\sum_{n\geq 0}\lambda^{-n}T^n x_m+\sum_{n\in \N} \lambda^nS_nx_m, \ \ \lambda \in\T, \ \ m\in \N .
$$
They span $X$ since, for any functional  $x^*$ that vanishes on $E_m(\lambda)$ for each $\lambda\in\T$ and $m\in\N$, the equality $\langle x^* , E_m(\lambda)\rangle=0$ for fixed $m$ and for all $\lambda \in\T$ implies  that  $\langle x^*, T^n x_m\rangle=0$ for every $n\geq 0$. Thus,  $\langle x^*, x_m\rangle=0$ for each $m\in \N$, and by density $x^*=0$.

The previous Theorem can be applied to different classes of operators. A distinguished one is the class of weighted shifts on sequence $F$-spaces.

By a \emph{sequence space} we mean a topological vector space $X$ which is continuously included in $\omega$, the countable product of the scalar field $\K$. A \emph{sequence $F$-space} is a sequence space that is also an $F$-space. Given a sequence $w=(w_n)_n$ of non-zero weights, the associated  \emph{unilateral} (respectively, \emph{bilateral}) \emph{weighted backward shift} $B_w:\K^\N\to\K^\N$  is defined by $B_w(x_1,x_2,\dots )=(w_2x_2,w_3x_3,\dots )$ (respectively, $B_w:\K^\Z\to\K^\Z$ is defined by $B_w(\dots,x_{-1},x_0,x_1,\dots )=(\dots,w_0x_{0},w_1x_1,w_2x_2,\dots )$). When a sequence $F$-space $X$ is invariant under certain weighted backward shift $T$, then $T$ is also continuous on $X$ by the closed graph theorem.

 We refer the reader to, e.g., Chapter 4 of \cite{grosse-erdmann_peris2011linear} for more details about hypercyclic and chaotic weighted shifts on  Fr\'{e}chet sequence spaces. In particular, Theorems 4.6 and 4.12 in \cite{grosse-erdmann_peris2011linear} (we refer the reader to \cite{grosse-erdmann2000hypercyclic}
 for the original results) remain valid for $F$-spaces, and a bilateral (respectively, unilateral) weighted backward shift $T:X\to X$ on a sequence $F$-space $X$ in which the canonical unit vectors $(e_n)_{n\in\Z}$ (respectively, $(e_n)_{n\in\N}$) form an unconditional basis is chaotic if, and only if, $\sum_{n\in\Z} e_n$ (respectively, $\sum_{n\in\N} e_n$) converges unconditionally.

\begin{corollary}
Let $T:X\rightarrow X$ be a chaotic bilateral weighted backward shift on a sequence $F$-space $X$ in which $(e_n)_{n\in\Z}$ is an unconditional basis. Then there exists a $T$-invariant strongly mixing Borel probability measure on $X$ with full support.
\end{corollary}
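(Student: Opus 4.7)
The plan is to verify the hypotheses of Theorem~\ref{fhc} for the given chaotic bilateral weighted shift $T=B_w$ and then apply it.

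Let $X_0:=\spa\{e_n : n\in\Z\}$, which is dense in $X$ because $(e_n)_{n\in\Z}$ is an unconditional basis (one may further restrict to a countable $\mathbb{Q}$-linear dense subset if countability is desired). Since each weight $w_n$ is nonzero, the formula $Se_n:=w_{n+1}^{-1}e_{n+1}$ defines, by linear extension, a right inverse $S$ of $T$ on $X_0$; set $S_k:=S^k$ for $k\geq 0$. Condition~(iii) of Theorem~\ref{fhc} is then immediate on basis vectors and hence on $X_0$: $T^k S_k=I$ by iteration of $TS=I$, and $T^m S_k=S_{k-m}$ for $k>m$ by telescoping weight products.

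For (i) and (ii), by linearity one only needs to check that
\[
\sum_{k\geq 0} T^k e_n=\sum_{k\geq 0} w_n w_{n-1}\cdots w_{n-k+1}\,e_{n-k}
\quad\text{and}\quad
\sum_{k\geq 0} S^k e_n=\sum_{k\geq 0} (w_{n+1}\cdots w_{n+k})^{-1}\,e_{n+k}
\]
converge unconditionally in $X$ for each $n\in\Z$. A direct inspection shows that the bilateral concatenation $\sum_{k\geq 0} S^k e_0+\sum_{k\geq 1} T^k e_0$ is precisely the expansion of a nontrivial solution to the fixed-point equation $Tx=x$, whose coordinates read $(w_1\cdots w_n)^{-1}$ for $n>0$ and $w_0\cdots w_{n+1}$ for $n<0$. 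By the chaos characterization recalled just before the statement, the unconditional convergence in $X$ of $\sum_{n\in\Z} e_n$ (equivalent to $T$ being chaotic) translates, for a weighted shift, into the unconditional convergence of this fixed-point concatenation; splitting it yields both series for $n=0$, and the recurrences $T^{k+1}e_n=w_n T^k e_{n-1}$ and $S^{k+1}e_n=w_{n+1}^{-1}S^k e_{n+1}$ propagate the unconditional convergence to every basis vector $e_n$.

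With hypotheses~(i),~(ii) and~(iii) verified, Theorem~\ref{fhc} delivers the desired $T$-invariant strongly mixing Borel probability measure on $X$ with full support. The main subtlety is the translation of the chaos condition into unconditional convergence of the fixed-point expansion; the algebra of the shift makes this translation transparent, and the final invocation of Theorem~\ref{fhc} is a formality.
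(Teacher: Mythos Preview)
Your proof is correct and follows precisely the (implicit) route of the paper: the Corollary is stated there without a separate proof, as a direct application of Theorem~\ref{fhc} combined with the chaos characterization for weighted shifts recalled immediately before it. Your explicit verification of hypotheses (i)--(iii), taking $X_0=\spa\{e_n:n\in\Z\}$, the algebraic right inverse $Se_n=w_{n+1}^{-1}e_{n+1}$, and $S_k=S^k$, and then reading off the unconditional convergence of $\sum_k T^k e_n$ and $\sum_k S^k e_n$ from the fixed-point expansion guaranteed by chaos, is exactly what the paper intends.
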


\begin{remark}
The preceding result can be improved if $T$ is a unilateral backward shift operator on a sequence $F$-space. In that case, there exists a $T$-invariant exact Borel probability measure on $X$ with full support.
\end{remark}

\begin{proof}
Let $M=\{z_n \ ; \ n\in\N\}$ be a countable dense set in $\mathbb{K}$  with $z_1=0$. Let $(U_n)_n$ be a basis of balanced open $0$-neighbourhoods in $X$ such that
$U_{n+1}+U_{n+1} \subset U_n$, $n\in \mathbb{N}$. Since $T$ is chaotic, $\sum_{n=1}^\infty e_n$ converges unconditionally, so there exists an increasing sequence of positive integers $(N_n)_n$  with $N_{n+2}-N_{n+1}> N_{n+1}-N_n$ for all
$n\in \N$ such that
\begin{equation}\label{unconditional2}
\sum_{k>N_n}\alpha_ke_k\in  U_{n+1} , \mbox{ if } \alpha_k \in \{ z_1,\dots ,z_{2m}\}, \mbox{ for } N_m< k\leq N_{m+1}, \ m\geq n.
\end{equation}
We define $K=\prod_{k\in\mathbb{N}}F_k$ where
$$
F_k=\{z_1,\ldots z_m\} \mbox{ if } N_m<k\leq N_{m+1}, \ m\in\N, \mbox{ and } F_k=\{z_1\}, \mbox{ if } k\leq N_1.
$$
Let $K(s):=\sigma^s(K)$, $s\geq 0$. $K(s)$ is a compact space when endowed with the product topology inherited from ${M}^\mathbb{N}$, $s\geq 0$. We consider in ${M}^\mathbb{N}$ the product measure $\overline{\mu}=\bigotimes_{k\in\mathbb{N}}\overline{\mu_k}$, where $\overline{\mu_k}(\{z_n\})=p_n$ for all $n\in\mathbb{N}$ and $\overline{\mu_k}(M)=\sum_{n=1}^\infty p_n=1$, $k\in\N$. As before,
    we select the sequence $(p_n)_n$ of positive numbers such that, if
     $$
     \beta_j=\left(\sum_{i=1}^{j}p_i\right)^{N_{j+1}-N_{j}}, \mbox{ then } \prod_{j=1}^\infty\beta_j>0.
     $$
    It is known \cite[\S 4.12]{walters-peter1982an} that $\overline{\mu}$ is a $\sigma$-invariant exact Borel probability  measure. By setting $Z=\bigcup_{s\geq 0} K(s)$, we have $\overline{\mu}(Z)=1$.

    Now we define the  map $\Phi:K(s)\rightarrow X$ given by
    $$
    \Phi((\alpha_k)_{k\in\mathbb{N}})=\sum_{k=1}^\infty\alpha_ke_k.
    $$
    $\Phi$ is (well-defined and) continuous, $s\geq 0$. We have that $\Phi: Z\to X$ is measurable.
    $L(s):=\Phi(K(s))$ is compact in $X$, $s\geq 0$, and $Y:=\bigcup_{s\geq 0} L(s)=\Phi (Z)$ is  a   $T$-invariant Borel subset of $X$.

    We then define on $X$ the  measure $\mu(A)=\overline{\mu}(\Phi^{-1}(A))$ for all $A\in\mathfrak{B}(X)$. As in Theorem~\ref{fhc}, we conclude that $\mu$ is well-defined, and now it is a $T$-invariant exact Borel probability  measure with full support.
    \end{proof}

    Devaney chaos is therefore a sufficient condition for the existence of strongly mixing measures within the framework of weighted shift operators on sequence $F$-spaces. For some natural spaces it is even a characterization of this fact. For instance, F. Bayart and I. Z. Ruzsa \cite{BR} recently proved that weighted shift operators on $\ell^p$, $1\leq p<\infty$, are frequently hypercyclic if, and only if, they are Devaney chaotic. It turns out that this is equivalent to the existence of an invariant strongly mixing Borel probability measure with full support on $\ell^p$. Also, for the space $\omega$, every weighted shift operator is chaotic \cite{grosse-erdmann2000hypercyclic}. In particular, for the unilateral case we obtain exact measures.

    \begin{example}
    Every unilateral weighted backward shift operator on $\omega=\K^\N$ admits an invariant exact Borel probability measure with full support on $\omega$.
    \end{example}

    We finish the paper by mentioning that a continuous-time version of Theorem \ref{fhc} can be given  by using the Frequent Hypercyclicity Criterion for $C_0$-semigroups introduced in \cite{mangino_peris2011frequently}. This is part of a forthcoming paper  \cite{murillo_peris0000strong}.

    \section*{Acknowledgements}

    This work was supported in part by MEC and FEDER, Project
MTM2010-14909, and by GV, Project PROMETEO/2008/101. The first author was also supported by a
grant  from the FPU Program of MEC.  We thank the referee whose detailed report led to an improvement
in the presentation of this work.


\begin{thebibliography}{99}

    \bibitem{BadGr07a} C.~Badea and S.~Grivaux, Unimodular eigenvalues, uniformly distributed sequences and linear dynamics,
    \emph{Adv. Math.} 211 (2007), 766--793.

    \bibitem{BaGr05} F.~Bayart and S.~Grivaux, Hypercyclicity and unimodular point spectrum, \emph{J. Funct. Anal.} {226} (2005), 281--300.

    \bibitem{BaGr06} F.~Bayart and S.~Grivaux, Frequently hypercyclic operators, \emph{Trans. Amer. Math. Soc.} 358 (2006),  5083--5117.

    \bibitem{BaGr07} F.~Bayart and S.~Grivaux, Invariant Gaussian measures for operators on Banach spaces and linear dynamics, \emph{Proc. Lond. Math. Soc. (3)} 94 (2007), 181--210.

    \bibitem{bayart_matheron2009dynamics} F.~Bayart and \'E.~Matheron, \emph{Dynamics of linear operators}, Cambridge University Press, Cambridge, 2009.

    \bibitem{bayart_matheron0000mixing} F.~Bayart and \'E.~Matheron, Mixing operators and small subsets of the circle, preprint (arXiv:1112.1289v1).

    \bibitem{BR} F.~Bayart and I.~Z.~Ruzsa, Frequently hypercyclic weighted shifts, preprint.


    \bibitem{bonilla_grosse-erdmann2007frequently}
    A.~Bonilla and K.-G.~Grosse-Erdmann, Frequently hypercyclic operators and vectors, \emph{Ergodic Theory Dynam. Systems}  27 (2007), 383--404. Erratum: \emph{Ergodic Theory Dynam. Systems} 29 (2009), 1993--1994.

    \bibitem{DFGP12}
    M.~De la Rosa, L.~Frerick, S.~Grivaux and A.~Peris, Frequent hypercyclicity, chaos, and unconditional Schauder decompositions, \emph{Israel Journal of Mathematics}  190  (2012), 389--399.


    \bibitem{einsiedler_ward2011ergodic}
    M.~Einsiedler and T.~Ward.
    \newblock {\em Ergodic theory with a view towards number theory}, volume 259 of
      {\em Graduate Texts in Mathematics}.
    \newblock Springer-Verlag London Ltd., London, 2011.


    \bibitem{Fly95} E.~Flytzanis, Unimodular eigenvalues and linear chaos in Hilbert spaces, \emph{Geom. Funct. Anal.} {5} (1995), 1--13.


    \bibitem{Gri06} S.~Grivaux, A probabilistic version of the frequent hypercyclicity criterion, \emph{Studia Math.} 176 (2006), 279--290.

    \bibitem{Gri} S.~Grivaux, A new class of frequently hypercyclic operators, \emph{Indiana Univ. Math. J.} 60 (2011),  1177--1202.

    \bibitem{grosse-erdmann2000hypercyclic} K.-G.~Grosse-Erdmann, Hypercyclic and chaotic weighted shifts, \emph{Studia Math.} {139} (2000), 47--68.


    \bibitem{grosse-erdmann_peris2011linear}
    K.-G.~Grosse-Erdmann and A.~Peris~Manguillot.
    \newblock {\emph{Linear chaos.}}
    \newblock {Universitext, Springer-Verlag London Ltd., London}, 2011.


    \bibitem{mangino_peris2011frequently} E.~Mangino and A.~Peris, Frequently hypercyclic semigroups,
    \emph{Studia Math.} 202 (2011), 227--242.

    \bibitem{murillo_peris0000strong} M.~Murillo and A.~Peris, Strong mixing measures for
    $C_0$-semigroups, in preparation.


    \bibitem{Rdn93} R.~Rudnicki, Gaussian measure-preserving linear transformations, \emph{Univ. Iagel. Acta
    Math.} 30 (1993), 105--112.

    \bibitem{Rud12} R.~Rudnicki, {Chaoticity and invariant measures for a cell population model},
    \emph{J. Math. Anal. Appl.}, {393} (2012), 151--165.



    \bibitem{walters-peter1982an}
    P.~Walters, \textit{An introduction to ergodic theory}, Springer, New York-Berlin, 1982.

    \end{thebibliography}
    \end{document}